\def\dj{d\kern-0.4em\char"16\kern-0.1em}
\def\Dj{\mbox{\raise0.3ex\hbox{-}\kern-0.4em D}}
\begin{document}

\markboth{Louis H. Kauffman, Ljubica S. Velimirovi\' c, Marija S. Najdanovi\' c and Svetozar R. Ran\v ci\' c }
{Infinitsimal bending of knots and energy change}


\title{Infinitesimal bending of knots and energy change}

\author{Louis H. Kauffman\\
\small{Department of Mathematics, Statistics and Computer Science,} \\
\small{851 South Morgan Street, University of Illinois at Chicago,} \\ 
\small{Chicago, Illinois 60607-7045  and} \\
\small{Department of Mechanics and Mathematics,} \\
\small{Novosibirsk State University, Novosibirsk, Russia;} \\
\small{ e-mail: kauffman@uic.edu} \\
Ljubica S, Velimirovi\' c \\
\small{Faculty of Science and Mathematics, University of Ni\v s, Ni\v s, Serbia;} \\
\small{e-mail: vljubica@pmf.ni.ac.rs} \\
Marija S. Najdanovi\' c \\
\small{Faculty of Natural Sciences and Mathematics,} \\
\small{University of Pri\v stina, Kosovska Mitrovica, Serbia;} \\
\small{e-mail: marijamath@yahoo.com} \\
Svetozar R. Ran\v ci\' c\\
\small{Faculty of Science and Mathematics, University of Ni\v s, Ni\v s, Serbia;}\\
\small{e-mail: rancicsv@yahoo.com}\\ 
}

\maketitle

\begin{abstract}
We discuss infinitesimal bending of curves and knots in ${\cal
R}^3$. A brief overview of the results on the infinitesimal bending
of curves is outlined. Change of the Willmore energy, as well as of
the M\" obius energy under infinitesimal bending of knots is
considered. Our visualization tool devoted to visual representation
of infinitesimal bending of knots is presented.

\begin{description} \frenchspacing \itemsep=-1pt \item[] Mathematics Subject Classification 2000: 53A04, 53C45, 57M25, 57M27, 78A25
\item[] Key words: infinitesimal bending, variation,  knot, Willmore energy, M\" obius
 energy, OpenGL, C++
\end{description}

\end{abstract}

%

\def\demo{%
  \par\topsep6pt plus6pt
  \trivlist
  \item[\hskip\labelsep\it Proof.]\ignorespaces}
\def\enddemo{\qed \endtrivlist}
\expandafter\let\csname enddemo*\endcsname=\enddemo

\def\qedsymbol{\ifmmode\bgroup\else$\bgroup\aftergroup$\fi
  \vcenter{\hrule\hbox{\vrule height.6em\kern.6em\vrule}\hrule}\egroup}
\def\qed{\ifmmode\else\unskip\nobreak\fi\quad\qedsymbol}

 \newtheorem{proof}{Proof}[section]
 \newtheorem{theorem}{Theorem}[section]
 \newtheorem{prop}{Proposition}[section]
 \newtheorem{lem}{Lemma}[section]
 \newtheorem{po}{Corollary}[section]
 \newtheorem{corollary}{Corollary}[section]
\newtheorem{lema}{Lemma}[section]
\newtheorem{pr}{Example}[section]
\newtheorem{rem}{Remark}[section]
 \newtheorem{definition}{Definition}[section]
 \newtheorem{algo}{Algorithm}[section]
 \newcounter{slicica}[section]
\newcommand{\s}{\par\refstepcounter{slicica}{\small Figure \thesection.\arabic{slicica}.}}
\renewcommand{\theslicica}{Figure \thesection.\arabic{slicica}.}
\newcommand{\ra}[1]{\textrm{rank}({#1})}  
\newcommand{\ind}[1]{\textrm{ind}({#1})}  
\newcommand{\cond}[1]{\textrm{cond}({#1})}  
\newcommand{\tr}[1]{\textrm{Tr}({#1})}  

\font\bigbf=cmbx12 \font\ssr=cmss8 \font\sst=cmtt8 \font\tt=cmtt10
\font\ssb=cmbx8 \font\srm=cmr8 \font\ssi=cmti8 \font\sar=cmss12
\font\srs=cmr6

\renewcommand{\theequation}{\thesection.\arabic{equation}}

\section{Introduction}

This paper is devoted to studying the shape descriptors of knots
during infinitesimal bending. The problem of infinitesimal bending
of knots is a special part of the theory of  deformation. Bending
theory considers bending of manifolds, isometrical deformations as
well as infinitesimal bending. It  requires use of differential
geometry, mechanics, physics and has applications in modern computer
graphics. Infinitesimal bending is "almost" an isometric
deformation, or it is an isometric deformation in a precise
approximation. Arc length is stationary under infinitesimal bending
with a given precision.
 The infinitesimal bending caught the attention of many of the brightest minds in the history of
mathematics, including A. D. Alexandrov \cite{AD}, W. Blaschke
\cite{Blaske}, A. Cauchy \cite{Cauchy},  S. Cohn-Vossen \cite{K-F},
R. Connelly \cite{Connelly}, N. V. Efimov \cite{Ef}, V. T. Fomenko
\cite{Fomenko2}, A. V. Pogorelov \cite{Pogorelov}, I. N. Vekua
\cite{Vekua},  I. Ivanova-Karatopraklieva and I. Kh. Sabitov
\cite{IK-S}.

The physical properties of knotted and linked configurations in
space have long been of interest to mathematicians. More recently,
these properties have become significant to biologists, physicists,
and engineers among others. Their depth of importance and breadth of
application are now widely appreciated and steady progress continues
to be made each year.

 There are a few major fields in applied knot theory: physical knot theory, knot theory in the life sciences, computational knot theory, and geometric knot theory.

Physical knot theory is the study of mathematical models of knotting phenomena, often motivated by considerations from biology, chemistry, and physics (Kauffman 1991)\cite{KHL1}. Physical knot theory is used to study how geometric and topological characteristics of filamentary structures, such as magnetic flux tubes, vortex filaments, polymers, DNA's, influence their physical properties and functions. It has applications in various fields of science, including topological fluid dynamics, structural complexity analysis and DNA biology (Kauffman 1991 \cite{KHL2}, Ricca 1998, \cite{RR1}).

Traditional knot theory models a knot as a simple closed loop in three-dimensional space. Such a knot has no
thickness or physical properties such as tension or friction. Physical knot theory incorporates more realistic models.
 The traditional model is also studied but with an eye toward properties of specific embeddings ("conformations") of the circle.
  Such properties include ropelength and various knot energies (O'Hara 2003 \cite{OH1}).

The most abstract connection between knots and science is the
phenomenon we might call "patterns of analysis", where purely
mathematical definitions and relationships in abstract knot theory
are echoed by definitions and relationships in physics. This is a
connection developed by L. Kauffman \cite{KHL1, KHL2} and others
such as   V. Jones \cite{Jones}, V. Turaev and N. Reshetikin
\cite{Resh}, E. Witten \cite{Witten}.
\smallskip

\begin{figure}[th]
  \centerline{
  \includegraphics[width=4.6truecm]{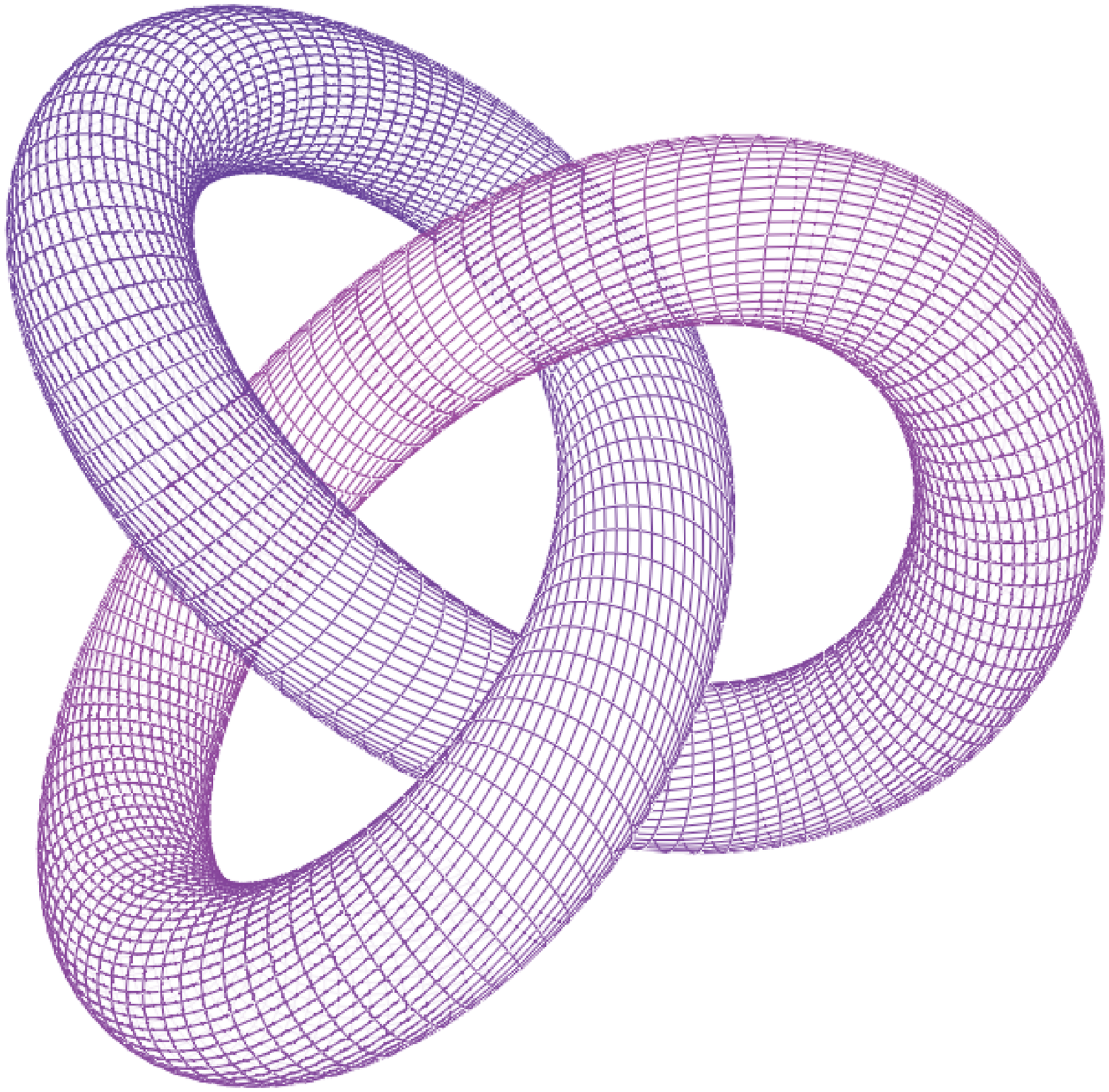}
  \includegraphics[width=4.6truecm]{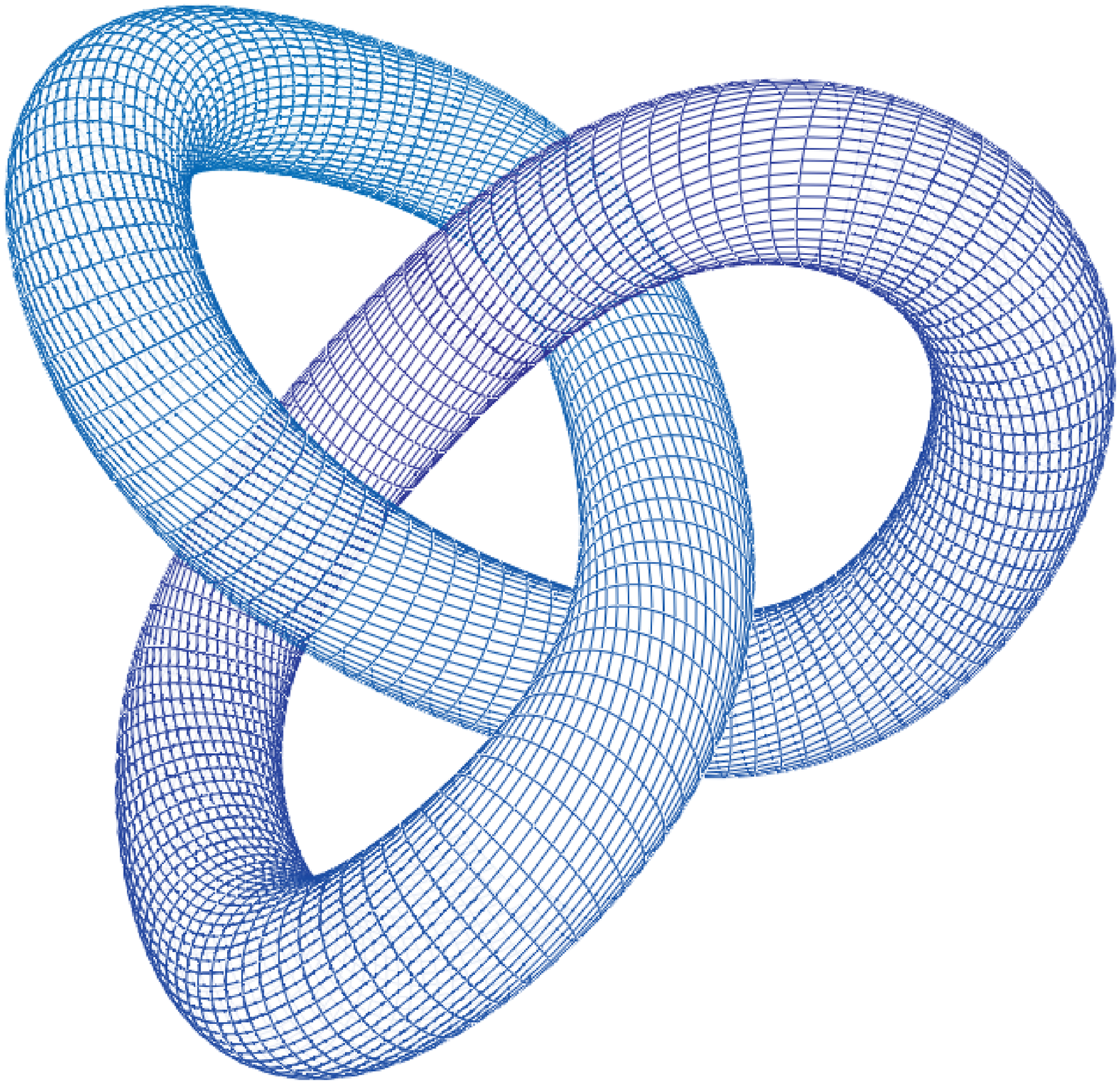}
  }
  \vspace*{8pt}
  \caption{Trefoil knot: basic and infinitesimally bent with $\epsilon = 0.3$.}\label{TrefoilFirst}
\end{figure}
\medskip
\begin{figure}[th]
   \centerline{
   \includegraphics[width=4.6truecm]{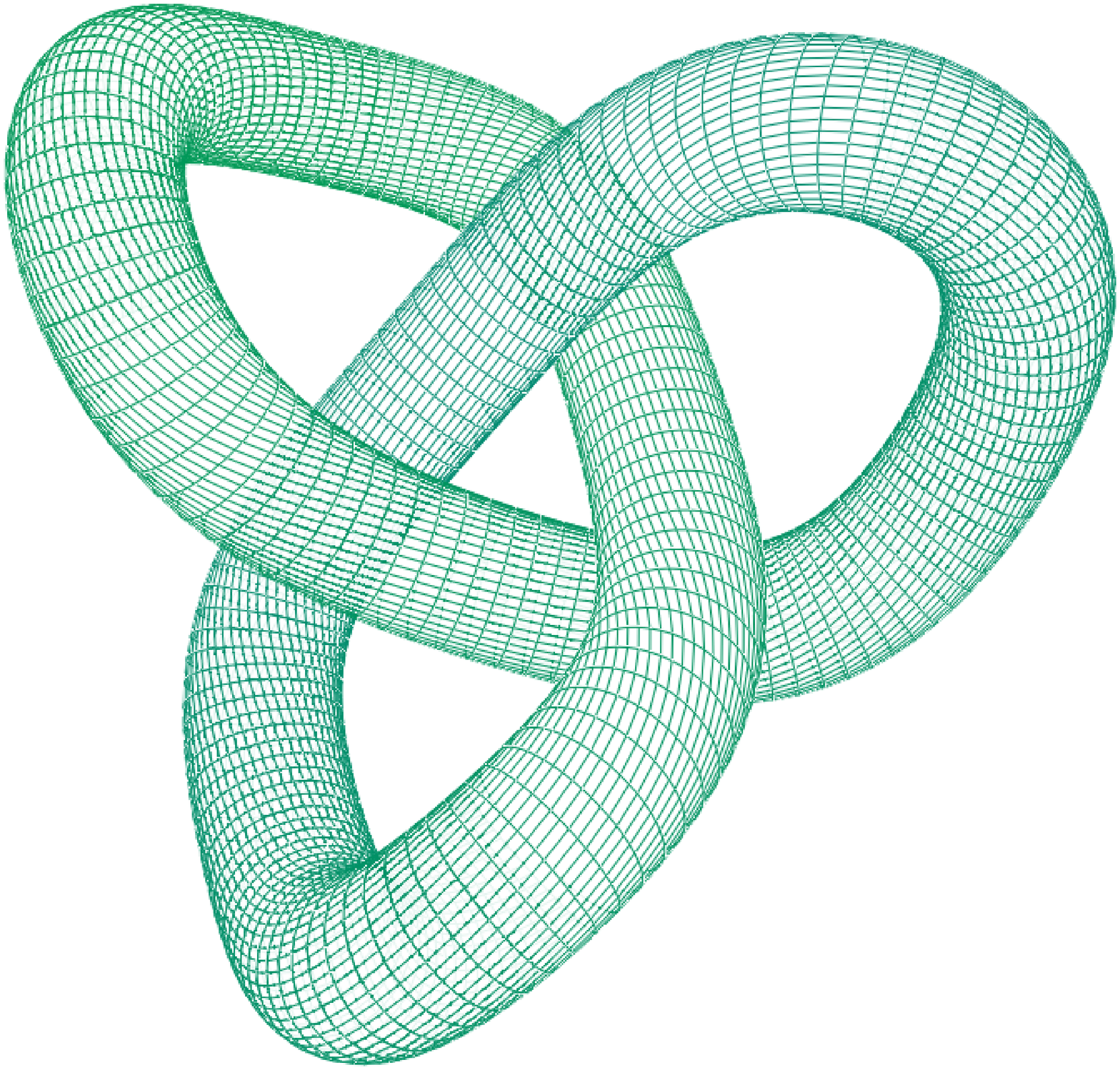}
   \includegraphics[width=4.6truecm]{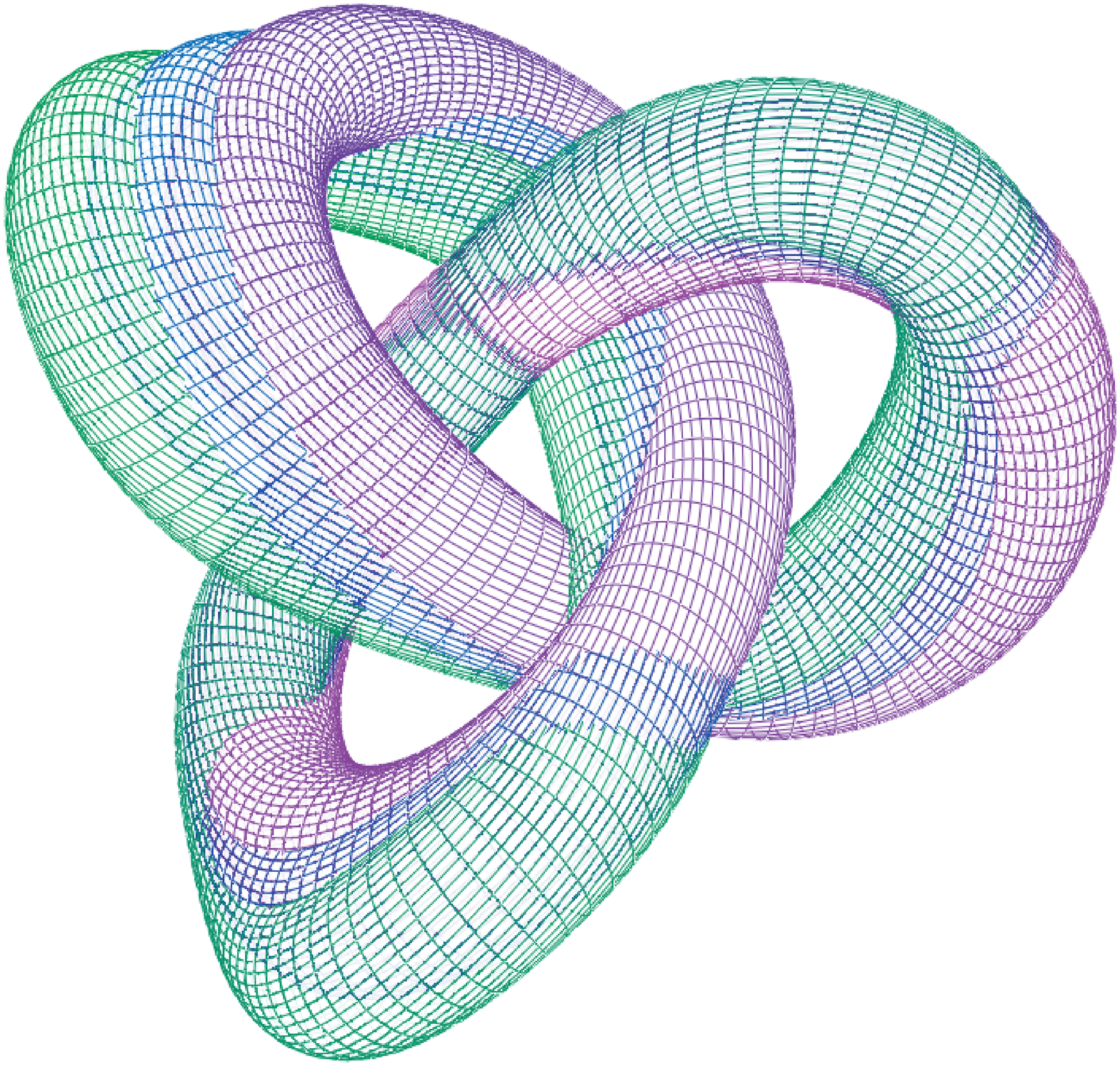}
   }
   \vspace*{8pt}
   \caption{Trefoil knot: infinitesimally bent with $\epsilon = 0.6$. and all knots together}\label{TrefoilSecond}
\end{figure}

Topologically ambient isotopic knots are by definition considered
equivalent. It is therefore  possible to think of a knot as a curve
with small but positive thickness which allows us to present it as a
tube. But in geometrical sense we could observe small deformations
of knots as a family of different curves.

Infinitesimal bending of a manifold, and particularly of a surface
or a curve in Euclidean 3-space, is a type of deformation
characterized by the rigidity of the arc length with a given
precision. In this case one observes changes of other magnitudes,
and then we say that they are rigid or flexible. For example, the
coefficients of the first quadratic form are rigid and of the second
one are flexible in the infinitesimal bending of a surface. The
theory of infinitesimal bending is in close connection with thin
elastic shell theory and leads to major mechanical applications.
Infinitesimal bending of curves and surfaces is studied, for
instance, in  (A. D. Aleksandrov 1936) \cite{AD}, (N. V. Efimov 1948)
\cite{Ef}, M. Najdanovi\' c \cite{Naj, NajVel}, I. Vekua
\cite{Vekua}, Velimirovi\' c et al \cite{LjV3}-\cite{LMN}.
Infinitesimal bending in generalized Riemannian space was studied at
Velimirovi\' c et al \cite{LSM}.

The paper is organized as follows: in Section 2, preliminary results
and notation regarding infinitesimal bending of curves are
presented. In Section 3, knot Willmore energy change under
infinitesimal bending is studied and the corresponding variation is
determined. In Section 4, the change of the M\" obius
 energy under infinitesimal bending of knots is considered. In
 Section 5, our visualization tool  devoted to visual representation
 of infinitesimal bending of knots is presented. The trefoil knot
 under infinitesimal bending is
 visualized.

 \setcounter{equation}{0}

\section{Infinitesimal bending of curves-preliminaries}

Let us consider a continuous biregular curve
\begin{eqnarray}\label{C}
C:{\bf r}={\bf r}(u),\quad u\in {\cal I}\subseteq{\cal
R}\end{eqnarray} included in a family of the curves
\begin{eqnarray}
C_\epsilon:\tilde{\bf r}(u,\epsilon)={\bf r}_\epsilon(u)={\bf
r}(u)+\epsilon{\bf z}(u),\quad u\in {\cal I},\,\epsilon\geq 0, \,
\epsilon\rightarrow 0,\end{eqnarray} where $u$ is a real parameter
and we get $C$ for $\epsilon=0$ ($C=C_0$).
\begin{definition}\cite{Ef}\label{def} A family of curves $C_\epsilon$ is
{\bf an infinitesimal bending of a curve } $C$ if
\begin{eqnarray}
ds_\epsilon^2-ds^2=o(\epsilon),\end{eqnarray} where ${\bf z}={\bf
z}(u)$ , ${\bf z}\in C^1$ is {\bf the infinitesimal bending field}
of the curve $C$.
\end{definition}

\begin{theorem}\emph{\cite{Ef}} A necessary and sufficient condition that ${\bf z}(u)$
is the infinitesimal bending field of a curve $C$ is to be
\begin{eqnarray}\label{potdov}d{\bf r}\cdot d{\bf z}=0,\end{eqnarray}
where $\cdot$ stands for the scalar product in ${\cal
R}^3$.
\end{theorem}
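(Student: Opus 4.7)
The plan is to prove this by directly computing $ds_\epsilon^2$ from the definition $\mathbf{r}_\epsilon(u) = \mathbf{r}(u) + \epsilon\,\mathbf{z}(u)$ and then reading off the condition imposed by Definition \ref{def}. Since differentiation with respect to $u$ commutes with addition, $d\mathbf{r}_\epsilon = d\mathbf{r} + \epsilon\, d\mathbf{z}$, so expanding the scalar square gives
\begin{equation*}
ds_\epsilon^2 = d\mathbf{r}_\epsilon\cdot d\mathbf{r}_\epsilon = d\mathbf{r}\cdot d\mathbf{r} + 2\epsilon\, d\mathbf{r}\cdot d\mathbf{z} + \epsilon^2\, d\mathbf{z}\cdot d\mathbf{z}.
\end{equation*}
Subtracting $ds^2 = d\mathbf{r}\cdot d\mathbf{r}$ yields
\begin{equation*}
ds_\epsilon^2 - ds^2 = 2\epsilon\, d\mathbf{r}\cdot d\mathbf{z} + \epsilon^2\, d\mathbf{z}\cdot d\mathbf{z}.
\end{equation*}
This identity is the entire computational content of the theorem; the rest is interpretation.

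For the sufficiency direction, I would assume $d\mathbf{r}\cdot d\mathbf{z} = 0$ and observe that only the $\epsilon^2$ term survives; since $\mathbf{z}\in C^1$ ensures $d\mathbf{z}\cdot d\mathbf{z}$ is a bounded quadratic differential in $du$ on compact pieces of $\mathcal{I}$, the remainder is $O(\epsilon^2) = o(\epsilon)$, so the infinitesimal bending condition holds. For the necessity direction, I would suppose $ds_\epsilon^2 - ds^2 = o(\epsilon)$ and divide the above identity by $\epsilon$; letting $\epsilon\to 0^+$ forces $2\, d\mathbf{r}\cdot d\mathbf{z} = 0$ pointwise on $\mathcal{I}$, whence the stated condition. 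Writing everything in terms of $u$ via $d\mathbf{r} = \mathbf{r}'(u)\,du$ and $d\mathbf{z} = \mathbf{z}'(u)\,du$ makes the limit rigorous: the condition $d\mathbf{r}\cdot d\mathbf{z} = 0$ is equivalent to $\mathbf{r}'(u)\cdot\mathbf{z}'(u) = 0$ for every $u\in\mathcal{I}$.

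There is no real obstacle here; the proof is essentially a one-line expansion plus a limit argument. The only point requiring a moment of care is to state clearly that the $o(\epsilon)$ condition in Definition \ref{def} is to be read pointwise in $u$ (so that the pointwise linear term must vanish), and to note that the $\epsilon^2$ remainder term is automatically negligible thanks to the $C^1$ regularity of $\mathbf{z}$. With that understood, the equivalence $ds_\epsilon^2 - ds^2 = o(\epsilon) \Leftrightarrow d\mathbf{r}\cdot d\mathbf{z} = 0$ follows immediately from the displayed identity.
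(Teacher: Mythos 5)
Your proof is correct and follows essentially the same route as the paper's: both expand $ds_\epsilon^2-ds^2 = 2\epsilon\, d\mathbf{r}\cdot d\mathbf{z}+\epsilon^2\, d\mathbf{z}\cdot d\mathbf{z}$ and read the equivalence off this identity. The only difference is that you spell out the two directions (the $\epsilon^2$ remainder being $o(\epsilon)$ for sufficiency, and dividing by $\epsilon$ and letting $\epsilon\to 0^+$ for necessity), which the paper compresses into a single ``$\Leftrightarrow$''.
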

\begin{proof} According to the definition of the
infinitesimal bending of a curve $C$ the following holds
$$\aligned ds_\epsilon^2&-ds^2=d{\bf r}_\epsilon^2-d{\bf r}^2=d{\bf
r}^2+2\epsilon\, d{\bf r}\cdot d{\bf z}+\epsilon^2d{\bf z}^2-d{\bf
r}^2=2\epsilon \,d{\bf r}\cdot d{\bf z}+\epsilon^2d{\bf
z}^2=o(\epsilon)\\&\Leftrightarrow d{\bf r}\cdot d{\bf
z}=0.\endaligned$$ \
\end{proof}

\begin{theorem}\emph{\cite{Naj}} Under infinitesimal bending of the curves each line
element undertakes a non-negative addition, which is the
infinitesimal value of order at least 2 (in $\epsilon$), i. e.
\begin{equation}\label{1}
ds_\epsilon-ds=o(\varepsilon)\geq 0.\end{equation}
\end{theorem}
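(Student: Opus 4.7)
The plan is to work directly from the expression for $ds_\epsilon^2 - ds^2$ that was already expanded in the proof of the preceding theorem, and then convert that difference of squares into a difference by factoring.

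First I would recall from the previous theorem's computation that
\[
ds_\epsilon^2 - ds^2 = 2\epsilon\, d\mathbf{r}\cdot d\mathbf{z} + \epsilon^2\, d\mathbf{z}^2.
\]
Since $C_\epsilon$ is assumed to be an infinitesimal bending of $C$, the necessary and sufficient condition $d\mathbf{r}\cdot d\mathbf{z}=0$ from the previous theorem kills the linear term, leaving
\[
ds_\epsilon^2 - ds^2 = \epsilon^2\, d\mathbf{z}^2.
\]
The right-hand side is manifestly non-negative, being $\epsilon^2$ times a square.

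Next I would factor the left side as a difference of squares. Because $C$ is biregular and the deformation is small, $ds>0$ and $ds_\epsilon>0$ for $\epsilon$ in a neighbourhood of $0$, so $ds_\epsilon + ds > 0$ and we may write
\[
ds_\epsilon - ds = \frac{ds_\epsilon^2 - ds^2}{ds_\epsilon + ds} = \frac{\epsilon^2\, d\mathbf{z}^2}{ds_\epsilon + ds}.
\]
This immediately gives $ds_\epsilon - ds \geq 0$, which is the non-negativity claim.

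For the order statement, I would divide by $\epsilon$ and pass to the limit: the numerator contains the explicit factor $\epsilon^2$, while the denominator tends to $2\,ds>0$ as $\epsilon\to 0$, so
\[
\lim_{\epsilon\to 0}\frac{ds_\epsilon-ds}{\epsilon} = \lim_{\epsilon\to 0}\frac{\epsilon\, d\mathbf{z}^2}{ds_\epsilon + ds}=0,
\]
which is exactly $ds_\epsilon - ds = o(\epsilon)$ (in fact $O(\epsilon^2)$). I do not anticipate any real obstacle; the only point requiring a word of justification is that $ds_\epsilon + ds$ stays bounded away from zero, which follows from biregularity of $C$ together with the smallness of $\epsilon$, so the factoring step is legitimate.
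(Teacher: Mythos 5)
Your proof is correct, and after the shared first step it takes a genuinely different route from the paper's. Both arguments use the bending condition $d\mathbf{r}\cdot d\mathbf{z}=0$ to reduce to the exact identity $ds_\epsilon^2=ds^2+\epsilon^2\,d\mathbf{z}^2$. The paper then writes $ds_\epsilon=ds\bigl(1+\epsilon^2\|\dot{\mathbf{z}}\|^2/\|\dot{\mathbf{r}}\|^2\bigr)^{1/2}$ and expands the square root by the Maclaurin (binomial) series, reading off both the sign and the order from the leading term $\epsilon^2\frac{\|\dot{\mathbf{z}}\|^2}{2\|\dot{\mathbf{r}}\|^2}\,ds$. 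You instead factor the difference of squares, $ds_\epsilon-ds=\epsilon^2\,d\mathbf{z}^2/(ds_\epsilon+ds)$, which makes non-negativity an exact algebraic fact and gives the bound $0\le ds_\epsilon-ds\le \epsilon^2\,d\mathbf{z}^2/(2\,ds)=O(\epsilon^2)$ with no series at all. Your route is in one respect cleaner: the paper's final display infers $ds_\epsilon-ds\ge 0$ from the first term of an alternating expansion, which strictly speaking needs a remainder estimate (or simply the observation that $(1+x)^{1/2}\ge 1$ for $x\ge 0$), whereas your factorization makes the sign manifest for all $\epsilon$, not just small ones; it also sidesteps convergence questions for the binomial series. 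What the paper's expansion buys in exchange is the explicit leading coefficient $\|\dot{\mathbf{z}}\|^2/(2\|\dot{\mathbf{r}}\|^2)$, i.e.\ the second variation of the line element, a computation whose pattern is reused in Section 4 for $\|\mathbf{r}'_\epsilon(s)\|$ and $l_\epsilon(s,t)$. One remark on your closing caveat: the denominator issue is even simpler than you suggest, since the same orthogonality gives $ds_\epsilon=\sqrt{ds^2+\epsilon^2\,d\mathbf{z}^2}\ge ds>0$ for every $\epsilon$, with $ds>0$ from biregularity, so no smallness of $\epsilon$ is needed to keep $ds_\epsilon+ds$ away from zero.
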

\begin{proof} As $$d{\bf r}=\dot{\bf r}(u)du,\quad d{\bf
z}=\dot{\bf z}(u)du,$$ according to (\ref{potdov}), for
infinitesimal bending field of a curve $C$ we have
\begin{equation}\label{nv16}\dot{\bf r}(u)\cdot\dot{\bf z}(u)=0,\end{equation} where
dot denotes derivative with respect to $u$. Based on that we have
\begin{equation}\aligned ds_\epsilon&=\|\dot{\bf r}_\epsilon(u)\|\,du=\|\dot{\bf
r}(u)+\epsilon\dot{\bf z}(u)\|\,du=(\|\dot{\bf
r}(u)\|^2+\epsilon^2\|\dot{\bf z}(u)\|^2)^{\frac
12}\,du\\&=\|\dot{\bf r}(u)\|\Big(1+\epsilon^2\frac{\|\dot{\bf
z}(u)\|^2}{\|\dot{\bf r}(u)\|^2}\Big)^{\frac
12}\,du=ds\Big(1+\epsilon^2\frac{\|\dot{\bf z}(u)\|^2}{\|\dot{\bf
r}(u)\|^2}\Big)^{\frac 12}\endaligned\end{equation} After using of
Maclaurin formula we get
$$ds_\epsilon=ds\Big(1+\epsilon^2\frac{\|\dot{\bf z}(u)\|^2}{2\|\dot{\bf
r}(u)\|^2}-\epsilon^4\frac{\|\dot{\bf z}(u)\|^4}{8\|\dot{\bf
r}(u)\|^4}+\ldots\Big)$$ i.e.
$$ds_\epsilon-ds=\epsilon^2\frac{\|\dot{\bf z}(u)\|^2}{2\|\dot{\bf
r}(u)\|^2}\,ds-\ldots,$$ which leads to
(\ref{1}). \
\end{proof}

The next theorem is related to determination of the infinitesimal
bending field of a curve $C$.

\begin{theorem}\emph{\cite{LjV3}}
The infinitesimal bending field for the curve $C$ (\ref{C}) reads
\begin{equation}\label{poljez11}
{\bf z}(u)=\int[p(u){\bf n}_1(u)+q(u){\bf
n}_2(u)]\,du,\end{equation} where $p(u)$ and $q(u)$ are arbitrary
integrable functions, and vectors ${\bf n}_1(u)$ and ${\bf n}_2(u)$
are respectively unit principal normal and binormal vector fields of
a curve $C$.
\end{theorem}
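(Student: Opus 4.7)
The plan is to obtain the claimed representation by decomposing $\dot{\mathbf{z}}(u)$ in the Frenet frame of $C$ and then integrating. Because $C$ is assumed biregular (nonvanishing curvature), the Frenet trihedron $\{\mathbf{t}(u),\mathbf{n}_1(u),\mathbf{n}_2(u)\}$ is well-defined and forms an orthonormal basis of $\mathcal{R}^3$ at every point of the curve, so any vector field along $C$ admits a unique decomposition in this frame.

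First I would invoke Theorem~2.1 to turn the infinitesimal-bending hypothesis into the pointwise orthogonality condition $\dot{\mathbf{r}}(u)\cdot\dot{\mathbf{z}}(u)=0$ (this is exactly equation (\ref{nv16})). Since $\dot{\mathbf{r}}(u)=\|\dot{\mathbf{r}}(u)\|\,\mathbf{t}(u)$, this means $\dot{\mathbf{z}}(u)$ is orthogonal to the tangent direction, hence lies in the normal plane spanned by $\mathbf{n}_1(u)$ and $\mathbf{n}_2(u)$. Therefore there exist scalar functions $p(u),q(u)$ with
\begin{equation*}
\dot{\mathbf{z}}(u)=p(u)\mathbf{n}_1(u)+q(u)\mathbf{n}_2(u).
\end{equation*}
The required regularity on $p$ and $q$ follows from $\mathbf{z}\in C^1$ and the smoothness of the Frenet vectors; in particular, integrability is guaranteed.

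Conversely, any $\mathbf{z}(u)$ defined by integrating a field of the form $p(u)\mathbf{n}_1(u)+q(u)\mathbf{n}_2(u)$ with $p,q$ arbitrary integrable functions satisfies $\dot{\mathbf{z}}\cdot\dot{\mathbf{r}}=0$ by orthogonality of the Frenet frame, so by Theorem~2.1 it is an infinitesimal bending field of $C$. Integrating then yields the claimed formula (\ref{poljez11}), the indefinite integral absorbing an arbitrary constant vector (a rigid translation, which is trivially an infinitesimal bending).

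There is no real obstacle here: the proof is essentially a one-line Frenet-frame decomposition. The only thing to keep an eye on is the biregularity assumption, which is what ensures $\mathbf{n}_1$ is defined, and the minor observation that the ``$\int$'' in (\ref{poljez11}) is an indefinite integral so that the constant of integration plays the role of an irrelevant translation of the bending field.
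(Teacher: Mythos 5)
Your proposal is correct and follows essentially the same route as the paper's own proof: apply the orthogonality condition $\dot{\bf r}\cdot\dot{\bf z}=0$ from Theorem~2.1, conclude that $\dot{\bf z}$ lies in the normal plane spanned by ${\bf n}_1$ and ${\bf n}_2$, and integrate. Your explicit verification of the converse direction and the remark about the integration constant are minor additions the paper leaves implicit, but the core argument is identical.
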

\begin{proof} According to (\ref{nv16}) we have
\begin{equation}
\dot{\bf r}\cdot\dot{\bf z}=0,\quad {\mbox{ i. e. }}\dot{\bf
r}\bot\dot{\bf z}.
\end{equation}
Based on that we conclude that $\dot{\bf z}$ lies in the normal
plane of the curve $C$, i. e.
\begin{equation}\label{1.7}
\dot{\bf z}(u)=p(u){\bf n}_1(u)+q(u){\bf n}_2(u),
\end{equation}
where $p(u)$ and $q(u)$ are arbitrary integrable functions.
Integrating (\ref{1.7}) we obtain
(\ref{poljez11}). \
\end{proof}

As
\begin{equation}\label{e8.1.8}
 {\bf{n}}_1=\frac{(\dot{\bf{r}}\cdot\dot{\bf{r}})\ddot{\bf{r}}-
(\dot{\bf{r}}\cdot\ddot{\bf{r}})\dot{\bf{r}}} {\|\dot{\bf{r}}\|
\|\dot{\bf{r}}\times\ddot{\bf{r}}\|}   ,\quad {\bf{n}}_2=
\frac{\dot{\bf{r}}\times\ddot{\bf{r}}}
{\|{\dot{\bf{r}}\times\ddot{\bf{r}}}\|},
\end{equation}
infinitesimal bending field can be written in the form

$${\bf{z}}(u) =\int[p(u)
\frac{(\dot{\bf{r}}\cdot\dot{\bf{r}})\ddot{\bf{r}}-
(\dot{\bf{r}}\cdot\ddot{\bf{r}})\dot{\bf{r}}} {\|\dot{\bf{r}}\|
\|\dot{\bf{r}}\times\ddot{\bf{r}}\|}+
q(u)\frac{\dot{\bf{r}}\times\ddot{\bf{r}}}
{\|{\dot{\bf{r}}\times\ddot{\bf{r}}}\|}]du$$ where $p(u)$ and $
q(u)$ are arbitrary integrable functions, or in the form
\begin{equation}\label{e8.1.9}
    {\bf{z}}(u)=\int[P_1(u)\dot{\bf{r}}+P_2(u)\ddot{\bf{r}}+
Q(u)(\dot{\bf{r}}\times\ddot{\bf{r}})]du
\end{equation}
where $P_i(u),\; i=1, 2,\;\text{i}\;Q(u)$ are arbitrary integrable
functions, too.

Under an infinitesimal bending, geometric magnitudes of the curve
are changed which is described with variations of these geometric
magnitudes.

\begin{definition}\cite{Vekua}\label{defvar} Let  ${\cal A}={\cal A}(u)$ be a magnitude which characterizes a geometric property on the curve  $C$
and   ${\cal A}_\epsilon={{\cal A}}_\epsilon(u)$  the corresponding
magnitude on the curve $C_\epsilon$ being infinitesimal bending of
the curve $C$, and set
\begin{eqnarray}\label{dvar}
\Delta {\cal A}\!=\!{\cal A}_\epsilon-{\cal A}\!=\!\epsilon\,\delta
{\cal A}+\epsilon^2\,\delta^2 {\cal A}+\ldots+ \epsilon^n\,\delta^n
{\cal A}+\ldots\end{eqnarray} The coefficients $\delta {\cal
A},\delta^2 {\cal A},\ldots,\delta^n {\cal A},\ldots$ are {\bf the
first, the second, ..., the n-th variation} of the geometric
magnitude ${\cal A}$, respectively under the infinitesimal bending
$C_\epsilon$ of the curve $C$.
\end{definition}
\smallskip

Let us mark some properties of the variations according to
\cite{NajVel}:

\smallskip
 {\bf I.} For the variations of the product of  geometric magnitudes
 it
 is effective the equation
\begin{equation}\label{formulaAB}\delta^n{\cal AB}=\sum_{i=0}^n\delta^i{\cal
A}\,\delta^{n-i}{\cal B},\quad n\geq 0, \quad(\delta^0{\cal
A}\overset{def}{=} {\cal A}).\end{equation}

According to Def. (\ref{defvar}), the variations of  geometric
magnitudes ${\cal A}$ and ${\cal B}$, as well as of the product
${\cal AB}$ are:
\begin{equation}\label{varA}
\Delta {\cal A}\!=\!{\cal A}_\epsilon-{\cal A}\!=\!\epsilon\,\delta
{\cal A}+\epsilon^2\,\delta^2 {\cal A}+\ldots +\epsilon^n\,\delta^n
{\cal A}+\ldots\
\end{equation}
\begin{equation}\label{varB}
\Delta {\cal B}\!=\!{\cal B}_\epsilon-{\cal B}\!=\!\epsilon\,\delta
{\cal B}+\epsilon^2\,\delta^2 {\cal B}+\ldots+ \epsilon^n\,\delta^n
{\cal B}+\ldots\
\end{equation}
\begin{equation}\label{varAB}
\Delta {\cal AB}\!=\!{\cal A}_\epsilon{\cal B}_\epsilon-{\cal
AB}\!=\!\epsilon\,\delta ({\cal AB})+\epsilon^2\,\delta^2 ({\cal
AB})+\ldots +\epsilon^n\,\delta^n ({\cal AB})+\ldots\
\end{equation}
respectively. On the other hand, the following equality is
satisfied:
\begin{equation}\label{varAB2}
\Delta {\cal AB}\!=\!{\cal A}_\epsilon{\cal B}_\epsilon-{\cal
AB}\!=\!{\cal A}_\epsilon{\cal B}_\epsilon-{\cal A}_\epsilon B+{\cal
A}_\epsilon B-{\cal AB}\!=\!{\cal A}_\epsilon({\cal
B}_\epsilon-B)+({\cal A}_\epsilon-A)B
\end{equation}
Substituting (\ref{varA}) and (\ref{varB}) into equation
(\ref{varAB2}) we obtain
\begin{equation}
\Delta {\cal AB}\!=\!\epsilon(A\delta B+B\delta
A)+\epsilon^2(A\delta^2B+\delta A\,\delta
B+B\delta^2A)+\ldots+\epsilon^n\sum_{i=0}^n\delta^i{\cal
A}\,\delta^{n-i}{\cal B}+\ldots
\end{equation}
As the left sides of the last equation and of the equation
(\ref{varAB}) are equal, the equation (\ref{formulaAB}) is a valid
one.

\smallskip

{\bf II.} An arbitrary order variation of a derivative  is the
derivative of the variation, i. e.
\begin{equation}\label{varpart}
\delta^n(\frac{d {\cal A}}{d u})=\frac{d(\delta^n{\cal A})}{d
u},\quad n\geq 0.
\end{equation}
For
\begin{equation}
\frac{d {\cal A}}{d u}={\cal B},
\end{equation}
 using ({\ref{dvar}) we obtain the equation
\begin{equation}\label{parta}
\triangle\frac{d{\cal A}}{d u}\!=\!\triangle{\cal
B}\!=\!\epsilon\,\delta {\cal B}+\epsilon^2\,\delta^2 {\cal
B}+\ldots+ \epsilon^n\,\delta^n {\cal B}+\ldots\!=\!\epsilon\,\delta
\frac{d{\cal A}}{d u}+\epsilon^2\,\delta^2 \frac{d{\cal A}}{d
u}+\ldots+ \epsilon^n\,\delta^n \frac{d{\cal A}}{d u}+\ldots\
\end{equation}
We also have
\begin{equation}\label{partA2}\aligned
\triangle\frac{d{\cal A}}{d u}\!&=\!\triangle{\cal B}\!=\!{\cal
B}_\epsilon(u)-{\cal B}(u)\!=\!\frac{d{\cal
A}_\epsilon(u)}{d u}-\frac{d{\cal A}(u)}{d u}\\
\!&=\!\frac{d}{d u}[{\cal A}_\epsilon(u)-{\cal
A}(u)]\!=\!\frac{d\triangle{\cal A}}{d
u}\!=\!\frac{d(\epsilon\,\delta {\cal A}+\epsilon^2\,\delta^2 {\cal
A}+\ldots+ \epsilon^n\,\delta^n
{\cal A}+\ldots)}{d u}\\
\!&=\!\epsilon\frac{d(\delta {\cal A})}{d
u}+\epsilon^2\frac{d(\delta^2 {\cal A})}{d
u}+\ldots+\epsilon^n\frac{d(\delta^n {\cal A})}{d u}+\ldots
\endaligned\end{equation}
By comparing the equations (\ref{parta}) and (\ref{partA2}), we
confirm validity of the equation (\ref{varpart}). The same case is
for the differential, i. e.

\smallskip
{\bf III.} $\delta^n(d{\cal A})=d(\delta^n{\cal A}), \quad n\geq 0.$

\smallskip
 In the case when we consider only the first variations we
can represent the magnitude ${\cal A}_\epsilon$ as

$${\cal A}_\epsilon={\cal A}+\epsilon\,\delta {\cal A},$$ by neglecting the  $\epsilon^n$-terms, $n\geq 2$.

The first variation is of course given by
\begin{equation}\label{prvavarijacija}
\delta {\cal A}\!=\!\frac{d}{d\epsilon}{\cal
A}_\epsilon(u)\big|_{\epsilon=0},
\end{equation}
i. e.
\begin{equation}
\delta {\cal A}=\lim_{\epsilon\rightarrow 0}\frac{\Delta{\cal
A}}{\epsilon}= \lim_{\epsilon\rightarrow 0}\frac{{\cal
A}_\epsilon(u)-{\cal A}(u)}{\epsilon}.\end{equation}

Also, we have:
 \begin{equation}\label{pravila}
 {\bf Ia.}\quad \delta({\cal AB})={\cal A}\delta {\cal B}+{\cal B}\delta {\cal A},\quad {\bf IIa.}\quad
 \delta\Big(\frac{\partial {\cal A}}{\partial u}\Big)=\frac{\partial(\delta
 {\cal A})}{\partial u},\quad {\bf IIIa.}\quad\delta(d{\cal A})=d(\delta{\cal A}).\end{equation}

 According to (\ref{1}) we have
 $$\triangle
 (ds)=0\cdot\epsilon+\epsilon^2\delta^2(ds)+...=\delta(ds)\epsilon+\delta^2(ds)\epsilon^2+...$$
 thus
 \begin{equation}
 \delta(ds)=0.
 \end{equation}
 Therefore, under infinitesimal bending of a curve, the first variation of
 the line element $ds$ is equal to zero.

 {\bf A curve parameterized by the arc length under
infinitesimal bending.} Let us consider a curve
\begin{equation}\label{cs}C: {\bf r}={\bf r}(s)=r[u(s)],\quad s\in
{\cal J}=[0,L],\end{equation}parameterized by the arc length $s$.
The unit tangent to the curve is given by ${\bf t}={\bf r}'$, where
prime denotes a derivative with respect to arc length $s$. Clearly,
${\bf t}'$ is orthogonal to ${\bf t}$, but $ {\bf t}''$ is not. The
classical Frenet equations
\begin{equation}\label{frenet}\aligned
{\bf t}'&=k{\bf n}_1,\\
{\bf n}_1'&=-k{\bf t}+\tau{\bf n}_2,\\
{\bf n}_2'&=-\tau{\bf n}_1,\endaligned\end{equation} describe the
construction of an orthonormal basis $\{\bf{t},{\bf n}_1,{\bf
n}_2\}$ along a curve, where ${\bf n}_1$ and ${\bf n}_2$ are
respectively unit principal normal and binormal vector fields of the
curve. Note that for the Frenet trihedron to be well-defined, we
need $k\neq 0$ throughout. We choose an orientation with ${\bf
n}_2={\bf t}\times{\bf n}_1$. $k$ and $\tau$ are respectively the
curvature and the torsion.

Let us consider an infinitesimal bending of the curve (\ref{cs}),
\begin{equation}\label{ibs}
C_\epsilon: \tilde{\bf r}(s,\epsilon)={\bf r}_\epsilon(s)={\bf
r}(s)+\epsilon{\bf z}(s).\end{equation} As the vector field ${\bf
z}$ is defined in the points of the curve (\ref{cs}), it can be
presented in the form
\begin{equation}\label{poljez}
{\bf z}=z{\bf t}+z_1{\bf n}_1+z_2{\bf n}_2,
\end{equation}
where $z{\bf t}$ is tangential and $z_1{\bf n}_1+z_2{\bf n}_2$ is
normal component, $z,z_1,z_2$ are the functions of $s$.

\begin{theorem}\emph{\cite{Naj}}
Necessary and sufficient condition for the field ${\bf z}$
(\ref{poljez}) to be infinitesimal bending field of the curve $C$
(\ref{cs}) is
\begin{equation}\label{potdovz}
z'-kz_1=0, \end{equation} where $k$ is the curvature of $C$.
\end{theorem}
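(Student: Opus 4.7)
The plan is to specialize the general necessary-and-sufficient condition $d{\bf r}\cdot d{\bf z}=0$ from the earlier theorem to the arc-length parameterization, and then expand ${\bf z}'$ using the Frenet equations.

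First I would note that, because $s$ is arc length, $d{\bf r} = {\bf t}\, ds$ and $d{\bf z} = {\bf z}'\, ds$, so the condition $d{\bf r}\cdot d{\bf z}=0$ reduces to the pointwise equation ${\bf t}\cdot {\bf z}' = 0$ (necessary and sufficient). The task therefore becomes computing the tangential component of ${\bf z}'$ in terms of $z, z_1, z_2$ and extracting the coefficient of ${\bf t}$.

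Next I would differentiate ${\bf z} = z{\bf t} + z_1 {\bf n}_1 + z_2 {\bf n}_2$ and substitute the Frenet formulas (\ref{frenet}). Grouping terms in the Frenet frame gives
\begin{equation*}
{\bf z}' = (z' - k z_1){\bf t} + (kz + z_1' - \tau z_2){\bf n}_1 + (\tau z_1 + z_2'){\bf n}_2.
\end{equation*}
Since $\{{\bf t}, {\bf n}_1, {\bf n}_2\}$ is orthonormal, taking the scalar product with ${\bf t}$ isolates the coefficient $z' - k z_1$. Hence ${\bf t}\cdot {\bf z}' = 0$ if and only if $z' - k z_1 = 0$, which is exactly (\ref{potdovz}).

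There is no real obstacle here; the argument is essentially a one-line specialization of (\ref{potdov}) combined with the Frenet expansion. The only thing to be careful about is the bookkeeping of the three terms produced by ${\bf t}', {\bf n}_1', {\bf n}_2'$ so that the ${\bf t}$-coefficient is correctly identified as $z' - k z_1$ (only the $z{\bf t}'$ and $z_1 {\bf n}_1'$ contributions feed into that coefficient, via $k {\bf n}_1$ and $-k {\bf t}$ respectively). I would also remark explicitly, for the converse direction, that the equation $z' - k z_1 = 0$ makes ${\bf t}\cdot{\bf z}' \equiv 0$, which by the earlier theorem is sufficient for ${\bf z}$ to be an infinitesimal bending field, so necessity and sufficiency follow at once.
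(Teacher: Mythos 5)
Your proposal is correct and follows exactly the paper's own route: reduce (\ref{potdov}) to ${\bf t}\cdot{\bf z}'=0$ in the arc-length parameterization, expand ${\bf z}'$ in the Frenet frame, and read off the ${\bf t}$-coefficient $z'-kz_1$. The paper states this computation tersely; your version merely spells out the Frenet bookkeeping (and your expansion of ${\bf z}'$ agrees with the normal components the paper later records in (\ref{deltat})), so there is nothing to correct.
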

\begin{proof} According to (\ref{potdov}), the necessary and
sufficient condition for the field ${\bf z}$ to be infinitesimal
bending field of the curve $C$ is \begin{equation}\label{rzs}{\bf
r}'\cdot{\bf z}'=0,\end{equation} i. e. ${\bf t}\cdot{\bf z}'=0$.
Substituting Eq. (\ref{poljez}) into the previous equation and using
Frenet equations (\ref{frenet}), we obtain
(\ref{potdovz}). \
\end{proof}

 Let us describe the behavior of some geometric magnitudes
under
 infinitesimal bending of a curve according to \cite{Naj}.
As it is $\delta{\bf t}=\delta{\bf r}'=(\delta{\bf r})'={\bf z}'$,
using (\ref{poljez}), (\ref{potdovz}) and Frenet equations we obtain
\begin{equation}\label{deltat}
\delta{\bf t}=(z_1'-\tau z_2+kz)\,{\bf n}_1+(z_2'+\tau z_1)\,{\bf
n}_2.
\end{equation}

Applying commutativity of the variation and the derivative, we have
$\delta{\bf t}'=(\delta{\bf t})'$. Based on (\ref{deltat}), Frenet
equations and $z'=kz_1$ (due to (\ref{potdovz})), one obtains
\begin{equation}\aligned\label{deltat'}
\delta{\bf t}'=&-k(kz+z_1'-\tau z_2)\,{\bf
t}+(k'z+z_1''+(k^2-\tau^2)z_1-2\tau z_2'-\tau'z_2)\,{\bf
n}_1\\&+(k\tau z+2\tau z_1'+\tau'z_1+z_2''-\tau^2z_2)\,{\bf n}_2.
\endaligned\end{equation}
To evaluate $\delta k$, we take a variation of the first equation in
(\ref{frenet}). We obtain
$$\delta {\bf t}'=\delta k\,{\bf n}_1+k\,\delta{\bf n}_1.$$
Dotting with ${\bf n}_1$ and using the fact that a unit vector of
the orthonormal basis and its variation are orthogonal, the previous
equation becomes $\delta k={\bf n}_1\cdot \delta{\bf t}'$. This
leads to \begin{equation}\label{deltak} \delta
k=k'z+z_1''+(k^2-\tau^2)z_1-2\tau z_2'-\tau'z_2.\end{equation}
 after
using (\ref{deltat'}).

Let us take a variation of the Frenet equation for ${\bf n}_1'$ and
dot with ${\bf n}_2$. We have
\begin{equation}\label{3.13}
\delta\tau=k{\bf n}_2\cdot\delta{\bf t}+{\bf n}_2\cdot\delta{\bf
n}_1'.
\end{equation}
We now rewrite the second term on the right hand side as
\begin{equation}\label{3.14}
{\bf n}_2\cdot\delta{\bf n}_1'=({\bf n}_2\cdot\delta{\bf n}_1)'-{\bf
n}_2'\cdot\delta{\bf n}_1=({\bf n}_2\cdot\delta{\bf
n}_1)',\end{equation} after using the third Frenet equation. As it
is ${\bf r}''=k{\bf n}_1$, we have ${\bf t}'=k{\bf n}_1$, i. e.
${\bf n}_1'=\frac{1}{k}\,{\bf t}'.$ Farther, $\delta{\bf
n}_1=\delta(\frac{1}{k})\,{\bf t}'+\frac{1}{k}\,\delta{\bf t}',$
\begin{equation}\label{3.15}
{\bf n}_2\cdot\delta{\bf n}_1={\bf n}_2\cdot
\big[\delta(\frac{1}{k})\,k{\bf n}_1+\frac{1}{k}\,\delta{\bf
t}'\big]=\frac{1}{k}{\bf n}_2\cdot\delta{\bf t}'. \end{equation}
From (\ref{3.13}), (\ref{3.14}) and (\ref{3.15}) we obtain
\begin{equation}\label{3.16}
\delta\tau=k{\bf n}_2\cdot\delta{\bf t}+\Big(\frac{1}{k}{\bf
n}_2\cdot\delta{\bf t}'\Big)'\end{equation} Substituting
(\ref{deltat}) and (\ref{deltat'}) into (\ref{3.16}) and using
(\ref{potdovz}) we obtain
\begin{equation}\label{deltator}
\delta\tau=z\tau'+k(z_2'+2\tau z_1)+\Big\{\frac{1}{k}\big[2\tau
z_1'+\tau'z_1+z_2''-\tau^2z_2\big]\Big\}'.
\end{equation}

Applying the known roles about the variation, Frenet formulas, the
facts that ${\bf n}_1=\frac{{\bf t}'}{k}$ and ${\bf n}_2={\bf
t}\times{\bf n}_1$, we simply get the variations of the normals:

\begin{equation}
\delta{\bf n}_1=-(kz+z_1'-\tau z_2){\bf t}+\frac{1}{k}(k\tau
z+z_2''-\tau^2z_2+2\tau z_1'+\tau'z_1){\bf n}_2,
\end{equation}
\begin{equation}
\delta{\bf n}_2=-(z_2'+\tau z_1){\bf t}-\frac{1}{k}(k\tau
z+z_2''-\tau^2z_2+2\tau z_1'+\tau'z_1){\bf n}_1.
\end{equation}

Based on the previous considerations,  corresponding geometric
magnitudes of deformed curves under infinitesimal bending are:
$$
\tilde{k}=k_\epsilon =k+\epsilon[k'z+z_1''+(k^2-\tau^2)z_1-2\tau
z_2'-\tau'z_2],$$ $$ \tilde{\tau}=\tau_\epsilon
=\tau+\epsilon\Big\{z\tau'+k(z_2'+2\tau
z_1)+\Big[\frac{1}{k}\big(2\tau
z_1'+\tau'z_1+z_2''-\tau^2z_2\big)\Big]'\Big\},$$ $$ \tilde{{\bf
t}}={\bf t}_\epsilon={\bf t}+\epsilon\big[(z_1'-\tau z_2+kz)\,{\bf
n}_1+(z_2'+\tau z_1)\,{\bf n}_2\big],$$ $$ \tilde{{\bf n}_1}={\bf
n}_{1\epsilon}={\bf n}_1+\epsilon\big[-(kz+z_1'-\tau z_2){\bf
t}+\frac{1}{k}(k\tau z+z_2''-\tau^2z_2+2\tau z_1'+\tau'z_1){\bf
n}_2\big],$$ $$ \tilde{{\bf n}_2}={\bf n}_{2\epsilon}={\bf
n}_2+\epsilon\big[-(z_2'+\tau z_1){\bf t}-\frac{1}{k}(k\tau
z+z_2''-\tau^2z_2+2\tau z_1'+\tau'z_1){\bf n}_1\big],
$$
after neglecting the terms with $\epsilon^n,\quad  n\geq 2$.

 \setcounter{equation}{0}
\section{Knot Willmore energy change under infinitesimal bending }

Curvature-based energies play a main role in the description of both
physical and non-physical systems (see \cite{cap, anna, Hel, LMM,
LMN}).

Let us observe the Willmore energy of a knot $K$:
\begin{equation}
{\cal W}=\frac 12\int_{\cal J}k^2\,ds.
\end{equation}
The Willmore energy of a deformed knot will be
\begin{equation}
{\cal W}_\epsilon=\frac 12\int_{\cal
J}k_{\epsilon}^2\,ds_\epsilon=\frac 12\int_{\cal
J}(k+\epsilon\,\delta k)^2[ds+\epsilon\,\delta(ds)],\end{equation}
i. e.
\begin{equation} {\cal W}_\epsilon={\cal
W}+\epsilon\big[\int_{\cal J}k\,\delta k\,ds+\frac 12\int_{\cal
J}k^2\,\delta(ds)\big].
\end{equation}
As it is $\delta(ds)=0$, we obtain that
\begin{equation} \delta{\cal W}=\int_{\cal J}k\,\delta k\,ds,
\end{equation}
Applying (\ref{deltak}) we get
\begin{equation}\label{39}
\delta{\cal W}=\int_{\cal J}k[k'z+z_1''+(k^2-\tau^2)z_1-2\tau
z_2'-\tau'z_2]\,ds,
\end{equation}
wherefrom we have the next equation
\begin{equation}\label{varwill}\aligned
\delta{\cal W}&=\int_{\cal J}ds\,\big[(k''+\frac
12k^3-k\tau^2)z_1+(2k'\tau+k\tau')z_2\big]\\&+\int_{\cal
J}ds\,\big[\frac 12k^2z-k'z_1+kz_1'-2k\tau
z_2\big]',\endaligned\end{equation}  after a bit of calculation.

The Willmore energy of a deformed knot under infinitesimal bending
is
\begin{equation}\aligned
{\cal W}_\epsilon &={\cal W}+\epsilon\Big\{\int_{\cal
J}ds\,\big[(k''+\frac
12k^3-k\tau^2)z_1+(2k'\tau+k\tau')z_2\big]\\&+\int_{\cal
J}ds\,\big[\frac 12k^2z-k'z_1+kz_1'-2k\tau
z_2\big]'\Big\}.\endaligned\end{equation}

 In the case of infinitesimal bending of knots we specify   the condition ${\bf z}(0)={\bf z}(L)$  for the
infinitesimal bending field in order to get a family of closed
curves. Also, we suppose that the knot, as well as the infinitesimal
bending field are sufficiently smooth. Keeping this in mind we have
the following theorem.

\begin{theorem} Under infinitesimal bending of a knot $K$,  variation of its
Willmore energy is
\begin{equation}
\delta{\cal W}=\int_{\cal J}ds\,\big[(k''+\frac
12k^3-k\tau^2)z_1+(2k'\tau+k\tau')z_2\big],\end{equation} where $k$
and $\tau$ are the curvature and the torsion of $K$, respectively.
\end{theorem}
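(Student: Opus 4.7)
The plan is to read off the conclusion directly from equation (\ref{varwill}), so the only content is to justify that the second integral there, which is the integral of a total derivative, vanishes when the curve is a closed smooth knot and the bending field is taken to be periodic.

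First I would recall how (\ref{varwill}) is obtained from (\ref{39}). Starting from
\[
\delta\mathcal{W}=\int_{\mathcal{J}} k\bigl[k'z+z_1''+(k^2-\tau^2)z_1-2\tau z_2'-\tau'z_2\bigr]\,ds,
\]
I would integrate by parts term by term, using the identities $kk'z=\tfrac12(k^2)'z=\tfrac12\bigl[(k^2z)'-k^2z'\bigr]$ together with the bending condition $z'=kz_1$ from (\ref{potdovz}), $kz_1''=(kz_1')'-k'z_1'$ and $k'z_1'=(k'z_1)'-k''z_1$, and $-2k\tau z_2'=-(2k\tau z_2)'+(2k\tau)'z_2=-(2k\tau z_2)'+(2k'\tau+2k\tau')z_2$. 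Collecting bulk contributions gives the integrand $(k''+\tfrac12 k^3-k\tau^2)z_1+(2k'\tau+k\tau')z_2$, and collecting exact-derivative pieces gives
\[
\frac{d}{ds}\!\left[\tfrac12 k^2 z-k'z_1+kz_1'-2k\tau z_2\right],
\]
which is exactly the structure of (\ref{varwill}).

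The key step is then to discard the boundary term. Since the knot is closed and sufficiently smooth, the curvature $k(s)$, torsion $\tau(s)$ and their derivatives are $L$-periodic. The Frenet frame $\{\mathbf{t},\mathbf{n}_1,\mathbf{n}_2\}$ closes up smoothly along the knot because $k\neq 0$, so writing $\mathbf{z}=z\mathbf{t}+z_1\mathbf{n}_1+z_2\mathbf{n}_2$ and using the hypothesis $\mathbf{z}(0)=\mathbf{z}(L)$ from the closedness assumption forces $z$, $z_1$, $z_2$ to be $L$-periodic individually. Therefore the bracket $F(s)=\tfrac12 k^2 z-k'z_1+kz_1'-2k\tau z_2$ is $L$-periodic as well, and
\[
\int_{\mathcal{J}}\frac{dF}{ds}\,ds=F(L)-F(0)=0.
\]
Feeding this back into (\ref{varwill}) yields the claimed formula for $\delta\mathcal{W}$.

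There is no deep obstacle here; the main thing to be careful about is the integration-by-parts bookkeeping that produces the $k''z_1$ and $2k'\tau z_2$ terms in the bulk — in particular the use of $z'=kz_1$ to eliminate the tangential component $z$ from the integrand — and the verification that vector periodicity of $\mathbf{z}$ really does imply scalar periodicity of its Frenet components, which uses the smoothness of the Frenet frame along the closed knot.
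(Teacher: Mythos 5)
Your proposal is correct and takes essentially the same route as the paper: the paper passes from (\ref{39}) to (\ref{varwill}) by precisely the integration by parts you reconstruct (including the use of $z'=kz_1$ to convert the tangential term $kk'z$ into the $\tfrac12 k^3z_1$ contribution), and then obtains the theorem by discarding the total-derivative integral using the closedness condition ${\bf z}(0)={\bf z}(L)$ together with the smoothness assumption. Your write-up is in fact more explicit than the paper's ``after a bit of calculation,'' spelling out both the bookkeeping and the periodicity argument (note only that periodicity of $z_1'$ in the boundary bracket needs the field to be periodic to first order, which the paper's smoothness hypothesis supplies).
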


 \setcounter{equation}{0}
\section{Knot M\" obius energy change under infinitesimal bending }

In mathematics, the M\" obius energy of a knot is a particular knot energy, i.e. a functional on the space of knots. It was discovered by Jun O'Hara, who demonstrated that the energy blows up as the knot's strands get close to one another. This is a useful property because it prevents self-intersection and ensures the result under gradient descent is of the same knot type.

We will here consider this type of energy under infinitesimal bending of knot.

Let $K$ be a smooth knot in 3-space ${\cal R}^3$. We can present $K$
as the image of the standard unit circle $S$ in the plane under a
smooth map ${\bf r}:S\rightarrow{\cal R}^3$. Then the M\" obius
energy of a knot is
\begin{equation*}
E(K)=\int_S\int_S\Big(\frac{1}{\|{\bf r}(s)-{\bf
r}(t)\|^2}-\frac{1}{l(s,t)^2}\Big)\,\|\dot{{\bf
r}}(s)\|\,\|\dot{{\bf r}}(t)\|\,dsdt,
\end{equation*}
where $l(s,t)$ denotes the minimum distance along the knot $K$
between points ${\bf r}(s)$ and ${\bf r}(t)$.

Let us assume the parametrization ${\bf r}$ be by arc length, then

\begin{equation}\label{rprim1}\|\dot{\bf r}(s)\|\equiv\|{\bf r}'(s)\|=1.\end{equation} So, we have

\begin{equation}
E(K)=\int_S\int_S\Big(\frac{1}{\|{\bf r}(s)-{\bf
r}(t)\|^2}-\frac{1}{l(s,t)^2}\Big)\,dsdt.
\end{equation}

Let us observe infinitesimal bending of $K$:
\begin{equation*}
K_\epsilon\,:\,{\bf r}_\epsilon={\bf r}(s)+\epsilon{\bf z}(s).
\end{equation*}
The energy of the deformed knot $K_\epsilon$ will be
\begin{equation*}
E(K_\epsilon)=\int_S\int_S\Big(\frac{1}{\|{\bf r}_{\epsilon}(s)-{\bf
r}_{\epsilon}(t)\|^2}-\frac{1}{l_{\epsilon}(s,t)^2}\Big)\,\|{{\bf
r}_{\epsilon}'}(s)\|\,\|{{\bf r}_{\epsilon}'}(t)\|\,dsdt,
\end{equation*}
where $l_{\epsilon}(s,t)$ is the distance long the deformed knot
$K_\epsilon$ between the points ${\bf r}_\epsilon(s)$ and ${\bf
r}_\epsilon(t)$.

Let us consider the following integrals:
\begin{equation}
E_1(K_\epsilon)=\int_S\int_S\frac{1}{\|{\bf r}_{\epsilon}(s)-{\bf
r}_{\epsilon}(t)\|^2}\,\|{{\bf r}}'(s)\|\,\|{{\bf
r}_{\epsilon}'}(t)\|\,dsdt,
\end{equation}
\begin{equation}\label{E2}
E_2(K_\epsilon)=\int_S\int_S\frac{1}{l_{\epsilon}(s,t)^2}\Big\|{{\bf
r}_{\epsilon}'}(s)\|\,\|{{\bf r}'}(t)\|\,dsdt.
\end{equation}
We are going to check the variations $\delta E_1$ and $\delta E_2$.
It holds
$$\aligned\|{{\bf r}}'_\epsilon(s)\|&=\|{{\bf r}}'(s)+\epsilon{{\bf
z}}'(s)\|=(\|{{\bf r}}'(s)+\epsilon{{\bf
z}}'(s)\|^2)^{1/2}\\&=(\|{{\bf r}}'(s)\|^2+2\epsilon{\bf
r}'(s)\cdot{\bf z}'(s)+\epsilon^2\|{{\bf
z}}'(s)\|^2)^{1/2}\endaligned$$ Using (\ref{rzs}), (\ref{rprim1})
and Maclaurin formula, we obtain
\begin{equation}\label{normrprim}
\|{{\bf r}}'_\epsilon(s)\|=1+\frac 12\epsilon^2\|{\bf
z}'(s)\|^2-\frac{1}{8}\epsilon^4\|{\bf z}'(s)\|^4+\ldots
\end{equation}
Therefore, $\delta\|{\bf r}'(s)\|=0$, i. e. $\|{{\bf
r}'}_\epsilon(s)\|=\|{\bf r}'(s)\|=1$ after neglecting the terms
with $\epsilon^n$, $n\geq2$. Further,

$$\aligned E_1(K_\epsilon)&=\int_S\int_S\frac{1}{\|{\bf r}_{\epsilon}(s)-{\bf
r}_{\epsilon}(t)\|^2}\,dsdt\\&=\int_S\int_S\frac{1}{\|({\bf
r}(s)-{\bf r}(t))+\epsilon({\bf z}(s)-{\bf
z}(t))\|^2}\,dsdt\\&=\int_S\int_S\frac{1}{\|{\bf r}(s)-{\bf
r}(t)\|^2}\Big(1-\frac{2\epsilon({\bf r}(s)-{\bf r}(t))\cdot({\bf
z}(s)-{\bf z}(t))}{\|{\bf r}(s)-{\bf
r}(t)\|^2}+\epsilon^2\ldots\Big)\,dsdt\endaligned$$ wherefrom we
obtain that
\begin{equation}\label{eq1}
\delta
E_1(K)=\frac{dE_1(K_\epsilon)}{ds}\Big|_{\epsilon=0}=-2\int_S\int_S\frac{({\bf
r}(s)-{\bf r}(t))\cdot({\bf z}(s)-{\bf z}(t))}{\|{\bf r}(s)-{\bf
r}(t)\|^4}\,dsdt
\end{equation}

On the other hand, we have
\begin{equation*}\aligned
l_\epsilon(s,t)&=\int_s^t\|\dot{{\bf
r}}_{\epsilon}(u)\|\,du=\int_s^t\|\dot{{\bf r}}(u)+\epsilon\dot{{\bf
z}}(u)\|\,du\\&=l(s,t)+\frac 12\epsilon^2\int_s^t\frac{\|\dot{{\bf
z}}(u)\|^2}{\|\dot{{\bf
r}}(u)\|}\,du-\frac{1}{8}\epsilon^4\int_s^t\frac{\|\dot{{\bf
z}}(u)\|^4}{\|\dot{{\bf r}}(u)\|^3}\,du+\ldots,
\endaligned
\end{equation*}
i. e. $\delta l(s,t)=0$ and, using (\ref{E2}) and
({\ref{normrprim}), we get
\begin{equation}\label{eq2}
\delta
E_2(K)=\frac{dE_2(K_\epsilon)}{d\epsilon}\Big|_{\epsilon=0}=0.
\end{equation}
According to (\ref{eq1}) and (\ref{eq2}) we obtain the next theorem.

\begin{theorem} Under infinitesimal bending of a knot $K$, variation of
the M\" obius energy is

\begin{equation}
\delta
E(K)=\frac{dE_1(K_\epsilon)}{ds}\Big|_{\epsilon=0}=2\int_S\int_S\frac{({\bf
r}(t)-{\bf r}(s))\cdot({\bf z}(s)-{\bf z}(t))}{\|{\bf r}(s)-{\bf
r}(t)\|^4}\,dsdt.
\end{equation}
\end{theorem}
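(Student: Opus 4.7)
The plan is to split the Möbius energy into the two natural pieces already introduced in the excerpt, $E(K_\epsilon)=E_1(K_\epsilon)-E_2(K_\epsilon)$, and compute the first variation of each separately. For both pieces, the integrand is a product of a reciprocal-distance factor with the two tangent-length factors $\|{\bf r}'_\epsilon(s)\|$ and $\|{\bf r}'_\epsilon(t)\|$. Applying the product rule for first variations (\ref{pravila} Ia) together with the already-established fact that $\delta\|{\bf r}'(s)\|=0$ (read off from (\ref{normrprim})), the two tangent-length factors contribute nothing at order $\epsilon$. Hence $\delta E$ reduces to the first variation of each reciprocal-distance factor, integrated over $S\times S$.

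For the first piece, I would expand the chord length in powers of $\epsilon$:
\begin{equation*}
\|{\bf r}_\epsilon(s)-{\bf r}_\epsilon(t)\|^{2}
= \|{\bf r}(s)-{\bf r}(t)\|^{2} + 2\epsilon\,({\bf r}(s)-{\bf r}(t))\cdot({\bf z}(s)-{\bf z}(t)) + O(\epsilon^{2}),
\end{equation*}
invert via the geometric series, and read off the coefficient of $\epsilon$ to obtain the expression for $\delta E_1$ displayed in (\ref{eq1}). Rewriting $-({\bf r}(s)-{\bf r}(t))=({\bf r}(t)-{\bf r}(s))$ yields the $+2$ in the claimed formula.

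For the second piece I would invoke Theorem 2.2 pointwise: the arc-length distance is $l_\epsilon(s,t)=\int_s^t\|\dot{\bf r}_\epsilon(u)\|\,du$, and since each line element has no first-order term in $\epsilon$ under infinitesimal bending (this is precisely $\delta(ds)=0$), we get $\delta l(s,t)=0$. Therefore $\delta(1/l(s,t)^{2})=0$ and $\delta E_2=0$ by (\ref{eq2}). Combining gives $\delta E(K)=\delta E_1(K)-\delta E_2(K)=\delta E_1(K)$, which is the claimed formula.

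The main obstacle, which I would flag but not attempt to fully resolve in a sketch, is the rigorous justification of exchanging the first variation (i.e.\ $d/d\epsilon$ at $0$) with the double integral over $S\times S$. The two summands $E_1$ and $E_2$ each have non-integrable singularities on the diagonal $s=t$ when considered in isolation; only their difference is integrable, thanks to the classical cancellation $\|{\bf r}(s)-{\bf r}(t)\|^{-2}-l(s,t)^{-2}=O(1)$ as $t\to s$. The same cancellation must persist at the level of first variations, because the bending condition $\dot{\bf r}\cdot\dot{\bf z}=0$ forces the numerator $({\bf r}(s)-{\bf r}(t))\cdot({\bf z}(s)-{\bf z}(t))$ to vanish to order $(s-t)^{3}$ on the diagonal. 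Assuming smoothness of the knot and of the bending field ${\bf z}$ (as the authors do just before the statement), this controls the singularity uniformly and legitimizes differentiation under the integral sign, which delivers the stated formula.
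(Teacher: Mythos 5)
Your proposal follows the paper's own proof essentially step for step: the same splitting $E=E_1-E_2$, the same expansion showing $\delta\|{\bf r}'(s)\|=0$, the same geometric-series expansion of the chord factor yielding (\ref{eq1}), and the same observation $\delta l(s,t)=0$ (hence $\delta E_2=0$ as in (\ref{eq2})), so the only substantive difference is that you explicitly flag the diagonal-singularity/differentiation-under-the-integral issue, which the paper passes over in silence. One correction to that rigor remark: since ${\bf r}'\cdot{\bf z}'\equiv 0$ holds identically along the curve, differentiating gives ${\bf r}''\cdot{\bf z}'+{\bf r}'\cdot{\bf z}''=0$ as well, so the numerator $({\bf r}(s)-{\bf r}(t))\cdot({\bf z}(s)-{\bf z}(t))$ in fact vanishes to order $(s-t)^4$ rather than $(s-t)^3$ --- the order-$3$ bound you state would leave a non-integrable $1/(s-t)$ singularity in $\delta E_1$ and would not by itself legitimize the exchange, whereas the order-$4$ cancellation makes the variational integrand bounded near the diagonal.
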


\section{Knot infinitesimal bending - examples}

Here we will illustrate infinitesimally bending on knots, given bay a simple parametric representation.
Our aim is to visualize changing in shape and geometrical magnitudes when infinitesimal bending is applied.
We start from knot representations as a curve in ${\cal R}^3$.
Then, according to (\ref{poljez11}) we apply the bending described
by $p(u)$ and $q(u)$. The bending field, given by (\ref{poljez11})
is defined by an integral whose sub integral function include arbitrary functions $p(u)$ and $q(u)$.
The knot is visualized as polygonal line which connect points on them. At every such point, as well as, every subdivision point for the purpose of numerically integral calculation we should calculate functions: the curve, $p(u)$ and $q(u)$, first and second derivative and normals of the curve.
Those calculations are necessary to obtain transformed shape of the curve.
Instead of using existing software capable to do symbolic and numeric calculations, we decided to develop our own software tool in  {\it Microsoft} Visual C++.
The tool is aimed for manipulating explicitly defined functions,
starting from its usual symbolic definitions as a string. The second
step is parsing its symbolic definitions to obtain an internal,
tree like, form. For the purpose of efficiency we parse function once, then calculate it many
times. There are some important benefits of the tree like
form as: combine more functions to obtain a compound function like sub integral function for infinitesimal field, make derivatives. Our tool has not possibility to calculate integral symbolically, instead we are using ability for fast calculation of sub integral function  according to
$F(x)=\int_{0}^x f(x)dx$ , with possibility to add a integration
constant.

Visualization of the knot and obtaining 3D model is done by using
{\it OpenGL}. In the  figures Figs. \ref{TrefoilFirst}-\ref{EightSecond} the knots are represented as a
tube around a curve. It looks like a rope, but without examination
physical characteristics of the rope.

A trefoil knot, see Figs. \ref{TrefoilFirst}-\ref{TrefoilSecond}, is given by parametric equations: $x=sin(u)+2cos(2u),
y=cos(y)-2cos(2u), z=-sin(3u)$ and bending field is defined by: $p(u) = cos(3u)$ and $q(u) =
sin(3u)$.

A figure eight knot is given by the parametric equations:
$x=(2+cos(2u))*cos(3u)$, $y=(2+cos(2u))*sin(3u)$, $z=sin(4u)$. The basic and infinitesimally bent figure eight knot are given in Figs. \ref{EightFirst}-\ref{EightSecond}.

The bending fields are defined by:  $p(u) = cos(6u)$ and $q(u) = sin(6u)$.

\begin{figure}[th]
  \centerline{
  \includegraphics[width=4.6truecm]{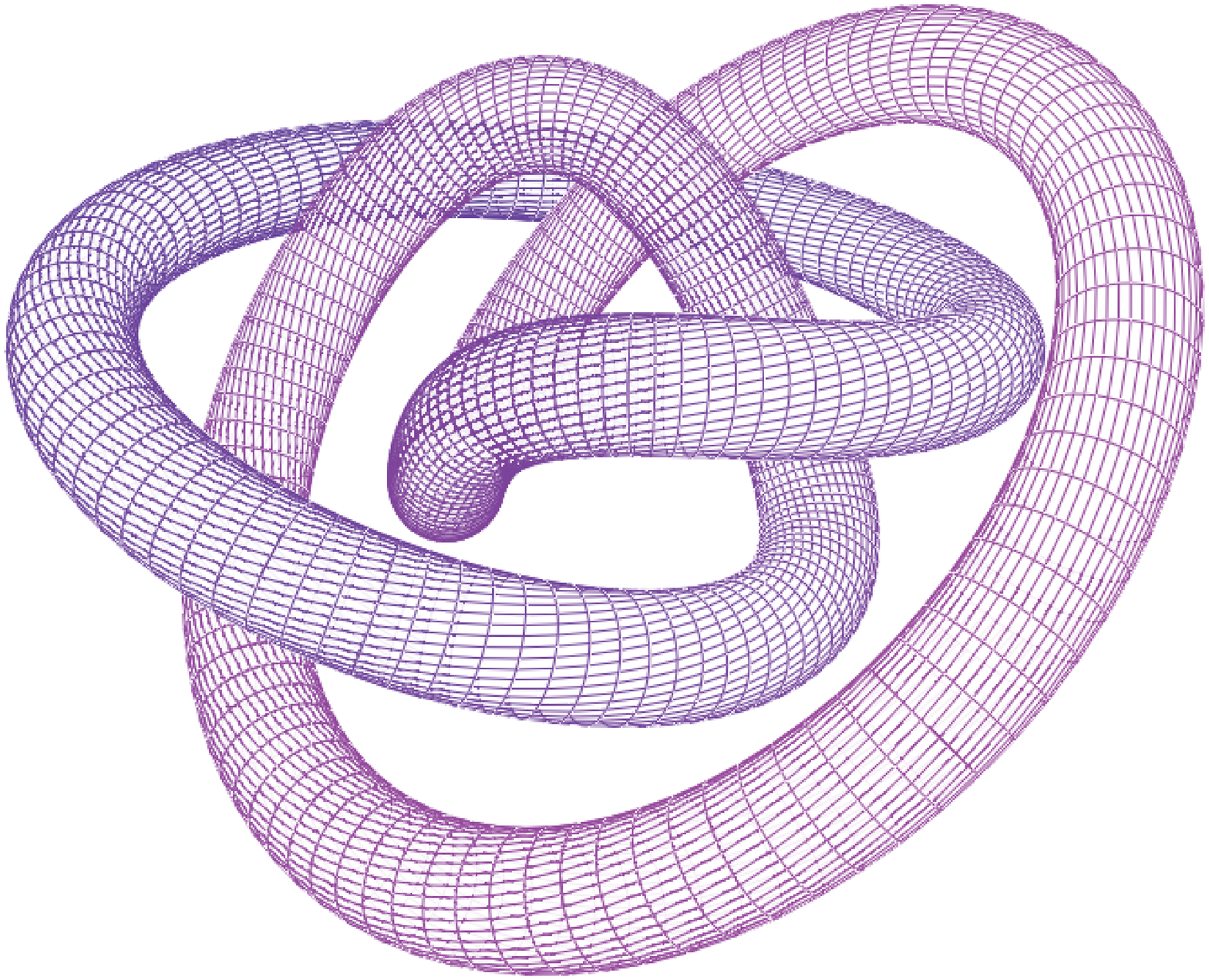}
  \includegraphics[width=4.6truecm]{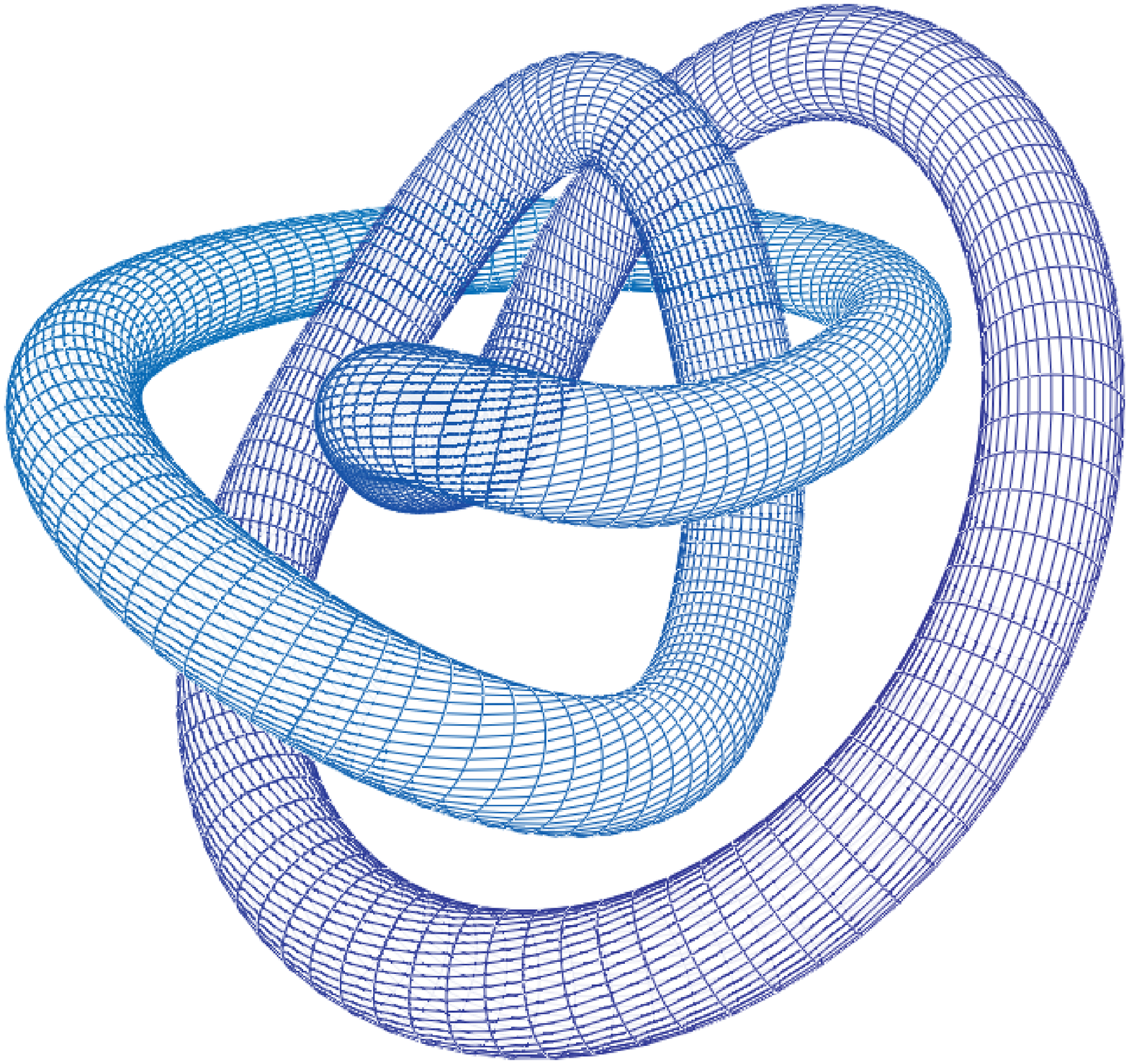}
  }
  \vspace*{8pt}
  \caption{Figure eight knot: basic and infinitesimally bent with $\epsilon = 1.4$.}\label{EightFirst}
\end{figure}
\medskip
\newpage
\begin{figure}[th]
  \centerline{
  \includegraphics[width=4.6truecm]{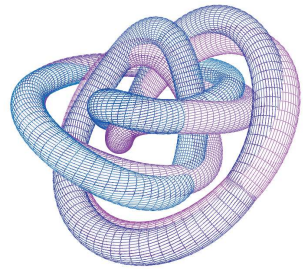}
  }
  \vspace*{8pt}
  \caption{Figure eight knot: basic and infinitesimally bent with $\epsilon = 1.4$ together.}\label{EightSecond}
\end{figure}
\smallskip

\section*{Acknowledgement}
The authors were supported from the research project 174012 of the
Serbian Ministry of Science and Quantum Topology Seminar at the
University of Illinois at Chicago. Kauffman's work was
supported by the Laboratory of Topology and Dynamics, Novosibirsk
State University (contract no. 14.Y26.31.0025 with the Ministry of
Education and Science of the Russian Federation).



\begin{thebibliography}{99}
\smallskip
\small\frenchspacing\itemsep=-1pt

\bibitem{AD}  {\rm  Aleksandrov, A. D.,\/}  {\it O beskonechno
malyh izgibaniyah neregulyarnyh po\-verh\-no\-st\-ei,}
 {\rm Matem. sbornik, 1(43), 3\/}  (1936),  307-321.


\bibitem{Blaske} {\rm Blaschke, W.\/},  {\it \"Uber affine Geometrie, XXIX: Die Starrheit der Eifl\"achen\/},
{\rm  Math. Z.,  9,\/}
 (1921), 142--146.


\bibitem{cap}  {\rm  Capovilla, R., Chryssomalakos, C. and Guven, J.,\/}  {\it Hamiltonians for curves}
 arXiv:nlin/0204049v2 [nlin.SI] 13 Jun 2002.

  \bibitem{Cauchy} {\rm  Cauchy, A.,\/}  {\it Sur les polygones et les polyedres,\/}
{\rm Second Memoire, Journ. Ec. Polyt.  9,\/}
 (1813), 87.

 \bibitem{K-F} {\rm  Cohn-Vossen, S. E.,\/}  {\it Nekotorye voprosy differ. geometrii c celom,\/}
{\rm J. Fizmatgiz, Moskva,\/}
 (1959).

 \bibitem{Connelly} {\rm Connelly, R.,'\/}{\it Conjectures and open
 questions in rigidity,} {\rm Proc. Int. Congr. Math., Helsinki, 15-23 Aug., {\bf 1}
 \/} (1982), 407--414.

\bibitem{anna}  {\rm  Dall'Acqua, A., Pozzi, P.,\/}  {\it A Willmore-Helfrich $L^2$-flow of curves with natural boundary conditions}
  arXiv:1211.0949v2 [math.AP] 3 Feb 2013.


  \bibitem{Ef} {\rm  Efimov, N.,\/}  {\it Kachestvennye voprosy teorii deformacii poverhnostei,\/}
{\rm Uspehi Mat. Nauk UMN 3 No 2 (24),\/}
 (1948) 47--158


  \bibitem{Fomenko2} {\rm  Fomenko, V. T.,\/}  {\it Nekotorye rezultaty teorii beskonechno malyh izgibanii poverhnostei,\/}
{\rm Mat. Sb., T. 72(114):3,\/}
 (1967), UDK 513.736.4,  388--411.

   \bibitem{Hel} {\rm  Helfrich, W.,\/}  {\it Elastic properties of lipid bilayers-theory  and possible  experiments,\/}
{\rm Z. Naturforsch. C {\bf 28} ,\/}
 (1973) 693--703

  \bibitem{IK-S} {\rm  Ivanova-Karatopraklieva, I. and  Sabitov, I.,\/}  {\it Bending of surfaces II,\/}
{\rm J. Math. Sci., New York 74 $N^o 3$,\/}
 (1995) 997--1043

 \bibitem{Jones}{\rm Jones, V. F. R.,\/} {\it On knot invariants related to some statistical mechanics models,\/} {\rm Pacific J.
Math., vol. 137, no. 2} (1989),  311-334.

\bibitem{KHL1} {\rm Kauffman, L.H. \/}  {\it Knots and Physics.\/}
{\rm Series on Knots and Everything 1,\/}
{\rm World Scientific, (1991)}.

\bibitem{KHL2} {\rm Kauffman, L.H., Editor \/}  {\it Knots and Applications.\/}
{\rm Series on Knots and Everything 6,\/}
{\rm World Scientific, (1991).}


\bibitem{Naj} {\rm Najdanovi\'c, M. S.,\/} {\it Infinitesimal
bending influence on the Willmore energy of curves,\/} {\rm Filomat
29:10 } (2015) 2411--2419.

\bibitem{NajVel} {\rm Najdanovi\'c, M. S., Velimirovi\'c, Lj. S.,
\/} {\it Second order infinitesimal bending of curves,\/} {\rm
Filomat 31:13} (2017), 4127--4137.


\bibitem{OH1} {\rm O'Hara, J. \/}  {\it Energy of Knots and Conformal Geometry,\/}
{\rm Series on Knots and Everything 33,\/}
{\rm World Scientific, (2003).}

 \bibitem{Pogorelov} {\rm  Pogorelov, A. V.,\/}  {\it Vneshnaya geometriya vypuklyh poverhnostei,\/}
{\rm Moskva, Nauka, \/}
 (1969).

 \bibitem{Resh}{\rm  Reshetikhin, N, Y.,  Turaev, V.,\/}{\rm Invariants of Three Manifolds via link polynomials and
quantum groups, Invent. Math. {\bf 103}\/}  (1991) 547--597.

\bibitem{RR1} {\rm Ricca, R.L. \/}  {\it Applications of knot theory in fluid mechanics,\/}
{\rm In Knot Theory (ed. V.F.R. Jones et al.), \/}
{\rm Banach Center Publs. 42, Polish Academy of Sciences, Warsaw, (1998) pp. 321--346.}



 \bibitem{Vekua} {\rm Vekua, I.,\/}  {\it Obobschennye analiticheskie funkcii,\/}
{\rm Moskva, Nauka, \/}
 (1959)



\bibitem{LjV3} {\rm Velimirovi\'c, Lj.,\/}  {\it Change of geometric magnitudes under infinitesimal bending, \/}
{\rm Facta Universitates, Vol.3, $N^o 11$}
 (2001)135--148

 \bibitem{LjV2} {\rm  Velimirovi\'c, Lj.,\/}  {\it Infinitesimal bending of curves,\/}
{\rm Matematicki bilten Skopje, Makedonija 25(LI),\/} (2001)
25--36

\bibitem{knjiga} {\rm  Velimirovi\'c, Lj.,\/}  {\it Infinitesimal bending,\/}
{\rm University of Ni\v{s}, Faculty of Science and Mathematics,\/}
2009.

 \bibitem{LMM} {\rm Velimirovi\'c, Lj. S.,  \'Ciri\'c, M. S., Cvetkovic, M. D.,\/}  {\it Change of the Willmore energy
  under infinitesimal bending of membranes, \/}
{\rm Computers and Mathematics with Applications, 59 (12),}
 (2010), 3679--3686

 \bibitem{LMN} {\rm Velimirovi\'c, Lj. S.,  \'Ciri\'c, M. S., Velimirovi\'c, N. M.,\/}  {\it On the Willmore energy of shells under infinitesimal deformations, \/}
{\rm Computers and Mathematics with Applications, 61 (11),}
 (2011), 3181--3190.

\bibitem{LSM} {\rm Velimirovi\'c Lj. S,  Min\v ci\'c S. M,  Stankovi\'c M. S., \/}  {\it Infinitesimal rigidity and flexibility of a non-symmetric affine connection space, \/}
{\rm European Journal of Combinatorics {\bf 31(4)}} (2010)
1148--1159.

\bibitem{Witten}{\rm Witten, E.,\/} {\it Quantum field theory and the Jones
polynomial,\/} {\rm Commun.Math.Phys. {\bf 121}}  (1989), 351--399.


















\end{thebibliography}
\end{document}